\documentclass[11pt,a4paper]{article}
\usepackage[T1]{fontenc}
\usepackage[utf8]{inputenc}
\usepackage{lmodern}
\usepackage{amsmath,amssymb,amsthm}
\usepackage[a4paper,margin=28mm]{geometry}
\usepackage{microtype}
\usepackage[hidelinks]{hyperref}
\usepackage{cite}
\numberwithin{equation}{section}
\newtheorem{theorem}{Theorem}[section]
\newtheorem{corollary}[theorem]{Corollary}
\newtheorem{lemma}[theorem]{Lemma}
\theoremstyle{remark}

\newcommand{\N}{\mathbb N}
\newcommand{\Sset}{\mathcal S}
\title{\Large\bfseries A necessary condition for a Romanoff type theorem}
\author{Xiao-Hui Yan\thanks{Corresponding author.}\hspace{0.5em} and Ya-Ting Xu\\[5pt]
\small School of Mathematics and Statistics,
Anhui Normal University,\\
\small Wuhu 241002, China\\[3pt]
\small Email: \texttt{yanxiaohui\_1992@163.com};
\texttt{980181682@qq.com}}
\date{}

\begin{document}
\maketitle
\begin{abstract}
Let $\{a_k\}_{k\ge1}$ be an infinite strictly increasing sequence
of positive integers with $a_k\mid a_{k+1}$ for every integer $k\ge1$.
Dai and Pan proved that the set of integers of the form
$p^2+q^2+a_k+a_l$, where $p,q$ are primes and $k,l\ge1$, has positive
lower asymptotic density if $\limsup_{k\to\infty}\frac{\log a_k}{k}<\infty$ and
$\sup_{k\ge1}\sum_{d\mid a_k}\frac1d<\infty$. They also asked whether the latter
condition is necessary. In this paper, we show that this condition
is necessary by proving that if $\sup_{k\ge1}\sum_{d\mid a_k}\frac1d=\infty$,
then this set has asymptotic density zero.
\end{abstract}
\noindent\textit{Keywords:} Romanoff theorem; prime square; divisibility
sequence; asymptotic density.\\[3pt]
\noindent\textit{2020 Mathematics Subject Classification:}
11P32, 11B05.

\section{Introduction}

Let $\N$ be the set of all positive integers. For a set
$\mathcal A\subseteq\N$, write $\mathcal A(x)=|\mathcal A\cap[1,x]|$.
Its lower and upper asymptotic densities are defined by
\[
\underline d(\mathcal A)=\liminf_{x\to\infty}\frac{\mathcal A(x)}x,
\qquad
\overline d(\mathcal A)=\limsup_{x\to\infty}\frac{\mathcal A(x)}x,
\]
respectively. If $\underline d(\mathcal A)=\overline d(\mathcal A)$,
then its asymptotic density exists and is denoted by $d(\mathcal A)$.

In 1934, Romanoff \cite{Romanoff} proved that the set
\[
\{p+2^k:p\text{ is prime and }k\ge0\}
\]
has positive lower asymptotic density. This result has been generalized in various directions;
see \cite{BallotLuca,ChenSparse,ChenSun,ChenXu,DaiPanPolynomial,
DingVerwee,DingZhou,Dubickas,Kuan,Lee,LiPan,LiuXue,
ShparlinskiWeingartner,SunChen,YangChen}.

In 1950, Erd\H{o}s
\cite[Theorem 4]{Erdos} extended Romanoff's theorem to strictly increasing sequences
$\{a_k\}_{k\ge1}$ of positive integers satisfying
$a_k\mid a_{k+1}$. He proved that the set
$\{p+a_k:p\text{ is prime and }k\ge1\}$ has positive lower
asymptotic density if and only if
\begin{equation}\label{eq:growth}
\limsup_{k\to\infty}\frac{\log a_k}{k}<\infty
\end{equation}
and
\begin{equation}\label{eq:divisors}
\sup_{k\ge1}\sum_{d\mid a_k}\frac1d<\infty.
\end{equation}

Liu, Liu and Zhan \cite{LLZ} considered an analogue of Romanoff's theorem for sums of two prime squares and proved that
\[
\{p^2+q^2+2^k+2^l:p,q\text{ are primes and }k,l\ge0\}
\]
has positive lower asymptotic density. Recently, Dai and Pan
\cite[Theorem 1.1]{DaiPan} extended the result of Liu, Liu and Zhan
to strictly increasing sequences $\{a_k\}_{k\ge1}$ of positive
integers with $a_k\mid a_{k+1}$ satisfying \eqref{eq:growth} and
\eqref{eq:divisors}. They proved the following result.
\begin{theorem}[\cite{DaiPan}, Theorem 1.1]\label{thm:daipan}
Let $\{a_k\}_{k\ge1}$ be an infinite strictly increasing sequence
of positive integers such that $a_k\mid a_{k+1}$ for every integer
$k\ge1$. If $\{a_k\}_{k\ge1}$ satisfies \eqref{eq:growth} and
\eqref{eq:divisors}, then the set
\[
\{p^2+q^2+a_k+a_l:
p,q\text{ are primes and }k,l\ge1\}
\]
has positive lower asymptotic density.
\end{theorem}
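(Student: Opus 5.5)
This is Dai and Pan's theorem, quoted from \cite{DaiPan}, so the natural plan is their second-moment argument in the form Romanoff used. Let $\Sset=\{p^2+q^2+a_k+a_l\}$ and let $r(n)$ denote the number of representations $n=p^2+q^2+a_k+a_l$ with $p,q\le\sqrt{x}/2$ and $a_k,a_l\le x/4$, so that every represented $n$ lies in $[1,x]$. By Cauchy--Schwarz,
\[
\Sset(x)\ \ge\ \frac{\bigl(\sum_{n\le x} r(n)\bigr)^2}{\sum_{n\le x} r(n)^2}.
\]
The first moment is easy to handle. By the prime number theorem there are $\gg x/\log^2 x$ pairs $(p,q)$. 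By \eqref{eq:growth} we have $a_k\le e^{Ck}$, and since the $a_k$ are strictly increasing and $a_k\mid a_{k+1}$ we also have $a_k\ge 2^{k-1}$. Hence the number of $k$ with $a_k\le x/4$ is $\asymp\log x$. Together these give $\sum_n r(n)\gg x$, and it remains to prove $\sum_n r(n)^2\ll x$.

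The second moment counts solutions of $p_1^2+q_1^2+a_{k_1}+a_{l_1}=p_2^2+q_2^2+a_{k_2}+a_{l_2}$. I would fix the four indices and write $m=a_{k_1}+a_{l_1}-a_{k_2}-a_{l_2}$. The task is then to bound the number of prime quadruples with $p_2^2+q_2^2-p_1^2-q_1^2=m$. For this I would use the upper-bound sieve estimate that Liu--Liu--Zhan and Dai--Pan use, namely the count is
\[
\ll \frac{x}{\log^4 x}\,\mathfrak S(m),
\]
with a singular series satisfying $\mathfrak S(m)\ll \sum_{d\mid m}\frac{\mu^2(d)\,c^{\omega(d)}}{d}$ when $m\ne0$, and a separate trivial bound when $m=0$. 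I would take this sieve input as known from those papers. The quadratic structure only changes local densities at primes $p\equiv 1\pmod 4$ and at $2$, which is harmless up to the factor $c^{\omega(d)}$.

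The main obstacle is summing these bounds over the $\ll\log^4 x$ choices of $(k_1,l_1,k_2,l_2)$ and showing that
\[
\sum_{k_i,l_i}\ \sum_{d\mid m}\frac{c^{\omega(d)}\mu^2(d)}{d}\ \ll\ \log^4 x .
\]
I would swap the order of summation. For a squarefree $d$, the divisibility $d\mid m$ is a congruence condition on $a_{k_1}+a_{l_1}-a_{k_2}-a_{l_2}$ modulo $d$. Because $a_k\mid a_{k+1}$, the sequence $a_k \bmod d$ is eventually $0$, namely once $d\mid a_k$. Before that point, the divisibility chain should force $a_k\bmod d$ to take few values. Following Erdős's argument, the number of tuples with $d\mid m$ should be $\ll (\log x)^3\,(\log x/\,\ell(d)+\cdots)$, where $\ell(d)$ is the first index with $d\mid a_{\ell}$, or a suitable generalization of it. One then needs $\sum_d c^{\omega(d)}/(d\,\ell(d))<\infty$, and this is exactly where \eqref{eq:divisors} enters. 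Since $\sum_{d\mid a_k}1/d$ is bounded, every $d$ coprime to all the $a_k$ must be handled by the trivial bound, and the divisors of the $a_k$ contribute a convergent total. The degenerate terms, with $m=0$ or with index coincidences such as $\{k_1,l_1\}=\{k_2,l_2\}$, would be handled separately using $\ll x^{1+\epsilon}$ divided by the extra logarithms they save.

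Verifying this Erdős-type counting with four indices instead of two is the step I expect to be hardest. There I would try grouping the tuples by the pair $(\min,\max)$ of the indices on each side.
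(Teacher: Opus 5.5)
The paper contains no proof of this statement. It is quoted from Dai and Pan and used only for the sufficiency half of Corollary~\ref{cor:criterion}, so your sketch has to stand on its own. The Cauchy--Schwarz set-up and the first moment are fine, and the quaternary sieve bound is a plausible black box (which you only assert). The core estimate $\sum_{k_i,l_i}\sum_{d\mid m}\mu^2(d)c^{\omega(d)}/d\ll\log^4x$, however, is only hoped for, and the mechanism you propose for it is wrong.

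Moduli $d$ coprime to every $a_k$ cannot be handled by a trivial bound. Take $a_k=2^k$: condition \eqref{eq:divisors} is automatic and every odd $d$ is coprime to the sequence. Your plan would then treat all odd $d$ trivially, which gives only $\sum_n r(n)^2\ll x(\log\log x)^c$ and does not even recover the Liu--Liu--Zhan theorem.

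These moduli are exactly where Romanoff's method does its work. With three indices fixed, $d\mid m$ confines $a_{l_2}$ to one residue class mod $d$. The standard route then needs $\#\{l\le L:a_l\equiv r\pmod d\}\le L/e(d)+1$ for an order-type quantity $e(d)$ with $\sum_d\mu^2(d)c^{\omega(d)}/(d\,e(d))<\infty$, plus a truncation in $d$ to absorb the $+1$. For $2^k$ this is $e(d)=\operatorname{ord}_d(2)$ together with Romanoff's lemma.

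For a general divisibility chain you must build such an $e(d)$ from \eqref{eq:growth}, and the obvious argument fails. Two indices $l<l'$ in the same class force $a_{l'}/a_l\equiv1\pmod d$, hence $a_{l'}/a_l>d$. This gives only $e(d)\gg\log d$, while $\sum_d\mu^2(d)/(d\log d)$ diverges. You cannot take $c<1$ either: the local factor of your quaternary problem at every odd $p\mid m$ is $1+1/p+O(p^{-2})$, not only at $p\equiv1\pmod 4$. This Erd\H{o}s-type lemma for chains is the missing idea, not a routine check.

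You have also misassigned the role of \eqref{eq:divisors}. If $d\mid a_\ell$, then every tuple with all four indices $\ge\ell$ satisfies $d\mid m$. So there are $(L-\ell+1)^4$ such tuples, with $L\asymp\log x$ the number of admissible indices, and there is no $1/\ell(d)$ saving. What makes these moduli harmless is that the sum of $\mu^2(d)c^{\omega(d)}/d$ over $d$ built from primes dividing some $a_k$ equals $\prod_p(1+c/p)\le\sup_k\bigl(\sum_{d\mid a_k}1/d\bigr)^{\max(1,c)}<\infty$. Tuples with some index below $\ell(d)$, and moduli mixing both kinds of primes, still need an argument.

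Finally, $m=0$ is not a negligible case. Since $a_{k+1}\ge2a_k$, $m=0$ exactly when $\{k_1,l_1\}=\{k_2,l_2\}$, which gives $\asymp\log^2x$ tuples. You therefore need the number of prime solutions of $p_1^2+q_1^2=p_2^2+q_2^2$ with all variables $\le\sqrt x$ to be $O(x/\log^2x)$; the diagonal solutions alone already have this size. An $x^{1+\epsilon}$ bound cannot be rescued by any saving of logarithms.
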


In \cite[Remark]{DaiPan}, they noted that \eqref{eq:growth} is necessary and asked whether
\eqref{eq:divisors} is necessary. In this paper, we answer this question by proving the following
result.

\begin{theorem}\label{thm:zero}
Let $\{a_k\}_{k\ge1}$ be an infinite strictly increasing sequence of positive integers such that
$a_k\mid a_{k+1}$ for every integer $k\ge1$. If
\[
\sup_{k\ge1}\sum_{d\mid a_k}\frac1d=\infty,
\]
then the set
\[
\{p^2+q^2+a_k+a_l:
p,q\text{ are primes and }k,l\ge1\}
\]
has asymptotic density zero.
\end{theorem}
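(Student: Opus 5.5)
We follow Erdős's argument for the necessity of \eqref{eq:divisors} in the case of $p+a_k$, adapted to the quantities $p^2+q^2$. The key observation is that if $\sum_{d\mid a_k}1/d$ is large, then $a_k$ has many small prime factors, so $\prod_{p\mid a_k}(1-1/p)^{-1}$ is large. The plan is to show that for a suitable modulus $m\mid a_K$, the values of $p^2+q^2+a_k+a_l$ modulo $m$ lie in a set of residues of small relative density, up to a negligible exceptional set.

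Fix $\varepsilon>0$. Since $\sum_{d\mid n}1/d\le\prod_{p\mid n}(1-1/p)^{-1}$, the hypothesis together with $a_k\mid a_{k+1}$ gives that $\prod_{p\mid a_K}(1-1/p)^{-1}$ is unbounded. Hence $\sum_{p\mid a_K}1/p$ is unbounded, and this sum runs over odd primes once $p=2$ is removed. Choose a finite set $P$ of odd primes dividing some $a_K$, with $\sum_{p\in P}1/p$ as large as we like, and set $m=\prod_{p\in P}p$, so that $m\mid a_K$. Every $a_k$ with $k\ge K$ is then divisible by $m$, since $a_K\mid a_k$. Split the set into two parts. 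The terms with $k<K$ or $l<K$ give, for fixed $a_k$, the set of values $p^2+q^2+a_k+a_l$ with $a_l\le x$. The number of $p^2+q^2\le x$ is $O(x/\log^2 x)\cdot$ (counting pairs $(p,q)$ with $p,q\le\sqrt x$ gives $\pi(\sqrt x)^2\ll x/\log^2 x$). The number of $l$ with $a_l\le x$ is $O(\log x)$, because strict divisibility forces $a_{l+1}\ge 2a_l$. So this part contributes $O(K\cdot x/\log x)=o(x)$. In fact the same crude bound shows that the entire set has size $O(x/\log^2x\cdot \log^2 x)=O(x)$, which is why we need the congruence restriction to get density zero.

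For $k,l\ge K$, every element is congruent modulo $m$ to $p^2+q^2$. The cases where $p\in P$ or $q\in P$ contribute at most $|P|\cdot\pi(\sqrt x)\cdot O(\log^2 x)=O(\sqrt x\log x)$ elements. Otherwise $p,q\nmid m$, so modulo each $r\in P$ the residue $p^2+q^2$ lies in $S_r=\{u^2+v^2: u,v\not\equiv0\}$. The main obstacle is that $S_r$ is nearly all residues, so we cannot gain from residue restriction alone. We must count with multiplicity: the count of pairs $(p,q)$ is small in total.

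We therefore argue as follows. The number of integers $n\le x$ representable in our set is at most the number of distinct triples $(p^2+q^2 \bmod$ nothing$)$. Erdős's trick here is to fix $l$ and $k$ and treat $n-a_k-a_l$ as the quantity to count, and this is where the real difficulty lies. The approach we would try first is a second-moment bound: let $r(n)=\#\{(p,q,k,l):n=p^2+q^2+a_k+a_l\}$. Then $\sum_{n\le x} r(n)\asymp x$ via the counting above, and by Cauchy–Schwarz $\#\{n:r(n)>0\}\ge(\sum r)^2/\sum r^2$, which goes the wrong direction. Instead, write the set as the union over the $O(\log^2x)$ shifts $s=a_k+a_l$ of $\mathcal Q+s$, where $\mathcal Q=\{p^2+q^2\}$. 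The number of distinct $n$ is then at most $\#\{(t,s): t\in\mathcal Q \bmod \text{fibers}\}$. To gain a factor, we use the sieve upper bound $\#\{t\le x: t=p^2+q^2,\ t\equiv c\ (m)\}\ll \frac{x}{\log^2x}\cdot\frac{\rho(c)}{m}\prod_{r\in P}(1-1/r)^{-2}$, where $\rho(c)$ counts the solutions of $u^2+v^2\equiv c$ with $u,v$ units. We then group by $n \bmod m$ and use that distinct shifts $s\equiv s'\pmod m$ all lie in one class $s\equiv a_k+a_l$. For $k,l\ge K$ all such $s$ are $\equiv0$, so the shifts do not spread the set over residue classes. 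We expect that bounding the overlap among the $\log^2x$ translates is the crux, and we would try to get the loss factor $\prod_{r\in P}(1-1/r)$ from the density of $\mathcal Q$ in the classes where $p,q$ are units modulo every $r\in P$. Letting $\sum_{P}1/r\to\infty$ then gives upper density at most $\varepsilon$.
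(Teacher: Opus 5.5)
Your reduction matches the paper's. You pick a finite set $P$ of odd primes dividing the $a_k$ with $H=\sum_{r\in P}1/r$ large, take $K$ with $m\mid a_K$, discard the $O(x/\log x)$ elements with $k<K$ or $l<K$, and use $n\equiv p^2+q^2\pmod m$ for $k,l\ge K$. You also correctly see why Erdős's coprimality argument does not transfer. But from there on the proposal is not a proof. The step that actually yields density zero is labelled ``the crux'' and left open, and the mechanisms you suggest for it cannot work:
\begin{itemize}
\item Restricting $p,q$ to units modulo $m$ costs nothing, since all but finitely many primes are units. So there is no loss factor $\prod_{r\in P}(1-1/r)$ to extract from it.
\item For $r\equiv1\pmod4$ with $r\ge13$, every class modulo $r$ is of the form $u^2+v^2$ with $u,v$ units. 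So when the divergence of $\sum 1/r$ comes from such primes, no restriction on the support of the residues gives a factor tending to $0$.
\item The overlap of the $O(\log^2x)$ translates is not the issue either: for $k,l\ge K$ all shifts are $\equiv0\pmod m$ and leave residue classes fixed.
\item Your per-class sieve bound is too large by a factor of about $m$. The number of prime pairs with $p^2+q^2\equiv c\pmod m$ is about $\rho(c)\pi(\sqrt x)^2/\varphi(m)^2$, whereas your expression, summed over $c$, exceeds the total number of pairs by a factor of order $m$. That makes it useless against the trivial bound $x/m$ per class.
\end{itemize}

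The missing idea is that, counted with multiplicity, $p^2+q^2$ is distributed modulo $m$ very differently from the integers. As a result, a set of classes of small density carries almost all prime pairs. For unit pairs $(u,v)$, a prime $r\equiv1\pmod4$ divides $u^2+v^2$ with frequency $2/(r-1)$, about twice the frequency $1/r$ for integers, while a prime $r\equiv3\pmod4$ never does. If $\sum1/r$ over $r\equiv3\pmod4$ diverges, the latter fact alone rescues Erdős's argument; the former is what handles $r\equiv1\pmod4$.

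The paper encodes both effects in $T(n)=\sum_{r\in P,\,r\mid n}\chi(r)-\sum_{r\in P}\chi(r)/r$, with $\chi(r)=\pm1$ according to $r\bmod4$. The mean of $T$ is $0$ over integers and at least $H$ over unit pairs, with variances at most $H$ and $\tfrac52H$ respectively. By Chebyshev, the classes with $T\ge H/2$ have density at most $4/H$. Meanwhile at most a proportion $10/H$ of unit pairs, and hence by Dirichlet's theorem of prime pairs, have $T(p^2+q^2)<H/2$. Since $T(n)=T(p^2+q^2)$ for $k,l\ge K$:
\begin{itemize}
\item elements with $T(n)\ge H/2$ number at most $4x/H+O(1)$;
\item the rest number at most $A(x)^2(10/H+o(1))\pi(\sqrt x)^2=O(x/H)$.
\end{itemize}
Letting $H\to\infty$ finishes the proof. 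Once you have such a separating set of classes, your idea of taking, in each class, the better of $x/m$ and $(\log^2x)\times$(pairs in the class) would also go through. Without it, the argument has no source of gain.
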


Combining Theorems~\ref{thm:daipan} and~\ref{thm:zero} with
the necessity of \eqref{eq:growth}, we obtain the following result.

\begin{corollary}\label{cor:criterion}
Let $\{a_k\}_{k\ge1}$ be an infinite strictly increasing sequence of positive integers such that
$a_k\mid a_{k+1}$ for every integer $k\ge1$. Then the set
\[
\{p^2+q^2+a_k+a_l:
p,q\text{ are primes and }k,l\ge1\}
\]
has positive lower asymptotic density if and only if
\eqref{eq:growth} and \eqref{eq:divisors} hold.
\end{corollary}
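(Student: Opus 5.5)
The corollary has two directions, and I would treat them separately. Sufficiency is exactly Theorem~\ref{thm:daipan}. If \eqref{eq:growth} and \eqref{eq:divisors} both hold, then Dai and Pan's theorem says the set
\[
\Sset=\{p^2+q^2+a_k+a_l: p,q\text{ primes},\ k,l\ge1\}
\]
has positive lower asymptotic density. Nothing further is needed there.

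For necessity I would argue by contraposition: if either \eqref{eq:growth} or \eqref{eq:divisors} fails, then $\underline d(\Sset)=0$. If \eqref{eq:divisors} fails, then $\sup_k\sum_{d\mid a_k}1/d=\infty$. Theorem~\ref{thm:zero} then gives $d(\Sset)=0$, and in particular $\underline d(\Sset)=0$.

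The remaining case is the failure of \eqref{eq:growth}. The paper attributes this necessity to Dai and Pan's remark, but I would include the short counting argument for completeness. Put $K(x)=|\{k: a_k\le x\}|$. Every element of $\Sset\cap[1,x]$ has the form $p^2+q^2+a_k+a_l$ with $p,q\le\sqrt x$ and $a_k,a_l\le x$. By Chebyshev's bound $\pi(\sqrt x)\ll \sqrt x/\log x$, this gives
\[
\Sset(x)\le \pi(\sqrt x)^2K(x)^2\le C\,\frac{x}{(\log x)^2}\,K(x)^2
\]
for an absolute constant $C$.

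Now suppose $\limsup_k (\log a_k)/k=\infty$, and fix a large $M$. There are infinitely many $k$ with $a_k>e^{Mk}$. For such $k$ take $x=a_k-1$. Since the sequence is strictly increasing, only $a_1,\dots,a_{k-1}$ can be at most $x$, so $K(x)\le k-1$. Also $\log x\ge Mk-1$, so $K(x)\le k\le(\log x+1)/M$. Inserting this into the bound above gives
\[
\frac{\Sset(x)}{x}\le \frac{C}{M^2}\Bigl(1+\frac1{\log x}\Bigr)^2 .
\]
These values of $x$ tend to infinity, so $\underline d(\Sset)\le C/M^2$. Since $M$ is arbitrary, $\underline d(\Sset)=0$.

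Both directions then together give the stated equivalence. The only delicate point is picking the evaluation points $x=a_k-1$ just below a rapidly growing term, so that $K(x)$ is controlled by $k$ rather than by $\log x$. The rest is immediate from Theorems~\ref{thm:daipan} and~\ref{thm:zero}.
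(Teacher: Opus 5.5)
Your proposal is correct and follows the paper's proof essentially step for step. Sufficiency comes from Theorem~\ref{thm:daipan}, necessity of \eqref{eq:divisors} from Theorem~\ref{thm:zero}, and necessity of \eqref{eq:growth} from the crude count $\pi(\sqrt x)^2K(x)^2$ evaluated at $x=a_k-1$; the only cosmetic difference is that you fix $M$ and let it grow at the end, whereas the paper takes a subsequence with $\log a_j/j\to\infty$ directly.
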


Throughout this paper, $p$ and $q$ denote primes, $\pi(x)$ denotes
the number of primes not exceeding $x$, and $\varphi$ is Euler's
totient function. For functions $f(x)$ and $g(x)$ with $g(x)>0$,
we write $f(x)=O(g(x))$ if there exists a constant $C>0$ such that
$|f(x)|\le Cg(x)$ for all sufficiently large $x$, and
$f(x)=o(g(x))$ if $f(x)/g(x)\to0$ as $x\to\infty$.

\section{Preliminary lemmas}

For an odd prime $q$, let
\[
\chi(q)=\begin{cases}
1,&q\equiv1\pmod4,\\
-1,&q\equiv3\pmod4.
\end{cases}
\]

\begin{lemma}\label{lem:local}
Let $\mathcal F$ be a nonempty finite set of odd primes, and let
\[
H=\sum_{q\in \mathcal F}\frac1q,\qquad Q=\prod_{q\in \mathcal F}q,
\qquad
T(n)=\sum_{\substack{q\in \mathcal F\\q\mid n}}\chi(q)
-\sum_{q\in \mathcal F}\frac{\chi(q)}q.
\]
Then
\begin{equation}\label{eq:integer-bound}
\left|\left\{1\le n\le Q:T(n)\ge\frac H2\right\}\right|
\le\frac{4Q}{H}
\end{equation}
and
\begin{equation}\label{eq:unit-bound}
\frac1{\varphi(Q)^2}
\left|\left\{(u,v):1\le u,v\le Q,\ (uv,Q)=1,
T(u^2+v^2)<\frac H2\right\}\right|
\le\frac{10}H.
\end{equation}
\end{lemma}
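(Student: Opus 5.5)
Both bounds follow from the Chinese remainder theorem together with Chebyshev's inequality. By the Chinese remainder theorem, choosing $n$ uniformly in $\{1,\dots,Q\}$ is the same as choosing its residues modulo the primes $q\in\mathcal F$ independently and uniformly. Likewise, choosing $(u,v)$ uniformly among pairs with $1\le u,v\le Q$ and $(uv,Q)=1$ is the same as choosing, independently for each $q\in\mathcal F$, a pair of nonzero residues $(u\bmod q,\,v\bmod q)$ uniformly. Hence in both settings the indicators $\mathbf 1_{q\mid n}$ (respectively $\mathbf 1_{q\mid u^2+v^2}$), for $q\in\mathcal F$, are independent random variables. In each case $T$ is a constant plus a sum of independent bounded terms, and it suffices to compute its mean and bound its variance.

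\emph{Proof of \eqref{eq:integer-bound}.} Since $\mathbb P(q\mid n)=1/q$, we have
\[
\mathbb E\,T(n)=\sum_{q\in\mathcal F}\frac{\chi(q)}q-\sum_{q\in\mathcal F}\frac{\chi(q)}q=0 .
\]
By independence and $\chi(q)^2=1$,
\[
\operatorname{Var}T(n)=\sum_{q\in\mathcal F}\frac1q\Bigl(1-\frac1q\Bigr)\le H .
\]
Chebyshev's inequality then gives $\mathbb P\bigl(T(n)\ge H/2\bigr)\le H/(H/2)^2=4/H$. Multiplying by $Q$ yields \eqref{eq:integer-bound}.

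\emph{Proof of \eqref{eq:unit-bound}.} First we count local solutions. For an odd prime $q$ and units $u,v$ modulo $q$, the congruence $u^2+v^2\equiv0\pmod q$ means $(u/v)^2\equiv-1$.
\begin{itemize}
\item If $q\equiv1\pmod4$, there are exactly two square roots of $-1$, so the congruence has $2(q-1)$ solutions among $(q-1)^2$ pairs. Thus $\mathbb P(q\mid u^2+v^2)=2/(q-1)$.
\item If $q\equiv3\pmod4$, $-1$ is not a square, so the probability is $0$.
\end{itemize}
Consequently
\[
\mathbb E\,T(u^2+v^2)=\sum_{\substack{q\in\mathcal F\\ q\equiv1\ (4)}}\Bigl(\frac2{q-1}-\frac1q\Bigr)+\sum_{\substack{q\in\mathcal F\\ q\equiv3\ (4)}}\frac1q\ \ge\ H,
\]
using $\frac2{q-1}-\frac1q\ge\frac1q$. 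The variance is a sum over $q\equiv1\pmod 4$ only, since the other terms are constant:
\[
\operatorname{Var}T(u^2+v^2)\le\sum_{\substack{q\in\mathcal F\\ q\equiv1\ (4)}}\frac2{q-1}\le\frac52 H,
\]
because every such $q$ satisfies $q\ge5$, whence $2/(q-1)\le\frac52\cdot\frac1q$. Since the mean is at least $H$, the event $T<H/2$ forces $|T-\mathbb E T|>H/2$. Chebyshev's inequality bounds its probability by $\frac52H/(H^2/4)=10/H$, which is \eqref{eq:unit-bound}.

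The only step needing care is the local count modulo $q$ and the resulting mean lower bound $\mathbb E\,T\ge H$. This is where the sign convention for $\chi$ matters: primes $q\equiv1\pmod4$ divide $u^2+v^2$ about twice as often as a random integer, and primes $q\equiv3\pmod4$ never do, so both classes push $T$ upward. The rest is routine.
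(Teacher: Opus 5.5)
Your proposal is correct and follows essentially the same route as the paper. Both use the Chinese remainder theorem to get the local densities $1/q$ and $(1+\chi(q))/(q-1)$, then show the mean of $T(u^2+v^2)$ is at least $H$, bound the variances by $H$ and $\frac52H$, and finish with Chebyshev's inequality; the only difference is that you phrase it through independence of the divisibility indicators, while the paper expands the second moment using the explicit counts for $q\mid n$ and $pq\mid n$, which is the pairwise independence you invoke.
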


\begin{proof}
For any primes $p,q\in \mathcal F$ with $p\ne q$, we have
\[
|\{1\le n\le Q:p\mid n\}|=\frac Qp,
\qquad
|\{1\le n\le Q:pq\mid n\}|=\frac Q{pq}.
\]
It follows that
\begin{align*}
\frac1Q\sum_{n=1}^{Q}T(n)^2
&=\frac1Q\sum_{n=1}^{Q}
\left(
\sum_{\substack{q\in\mathcal F\\q\mid n}}\chi(q)
-\sum_{q\in\mathcal F}\frac{\chi(q)}q
\right)^2\\
&=\frac1Q\sum_{n=1}^{Q}
\left(
\sum_{\substack{q\in\mathcal F\\q\mid n}}\chi(q)\right)^2
-\left(\sum_{q\in\mathcal F}\frac{\chi(q)}q
\right)^2\\
&=\frac1Q\sum_{p,q\in\mathcal F}\chi(p)\chi(q)
\sum_{\substack{1\le n\le Q\\q\mid n,~p\mid n}}1
-\sum_{p,q\in\mathcal F}\frac{\chi(p)\chi(q)}{pq}\\
&=\sum_{q\in\mathcal F}\frac1q
+\sum_{\substack{p,q\in\mathcal F\\p\ne q}}
\frac{\chi(p)\chi(q)}{pq}
-\sum_{q\in\mathcal F}\frac{1}{q^2}
-\sum_{\substack{p,q\in\mathcal F\\p\ne q}}
\frac{\chi(p)\chi(q)}{pq}\\
&=\sum_{q\in\mathcal F}\frac1q
\left(1-\frac1q\right)
\le H,
\end{align*}
where we used $\chi(q)^2=1$ in the penultimate equality. Thus
\[
\frac{H^2}4
\left|\left\{1\le n\le Q:T(n)\ge\frac H2\right\}\right|
\le\sum_{n=1}^QT(n)^2\le QH,
\]
which proves \eqref{eq:integer-bound}.

We next consider the integer pairs $(u,v)$ with $1\le u,v\le Q$ and
$(uv,Q)=1$. For every prime $q\in \mathcal F$, since
\[
\left|\left\{t:1\le t\le q-1,
t^2\equiv-1\pmod q\right\}\right|=1+\chi(q),
\]
it follows that
\begin{align*}
&\left|\left\{(u,v):1\le u,v\le q-1,
u^2+v^2\equiv0\pmod q\right\}\right|\\
=&\left|\left\{(u,v):1\le u,v\le q-1,
(uv^{-1})^2\equiv-1\pmod q\right\}\right|\\
=&(q-1)(1+\chi(q)).
\end{align*}
By the Chinese remainder theorem, for any prime
$q\in\mathcal F$,
\begin{align}\label{eq:uvcount}
&\frac{1}{\varphi(Q)^2}
\left|\left\{
(u,v):1\le u,v\le Q,\ (uv,Q)=1,\ q\mid u^2+v^2
\right\}\right|\notag\\
=&
\frac{(q-1)(1+\chi(q))
\prod_{\substack{p\in\mathcal F\\p\ne q}}(p-1)^2
}{\varphi(Q)^2}
=\frac{1+\chi(q)}{q-1},
\end{align}
and for any distinct primes $p,q\in\mathcal F$
\begin{align}\label{eq:uvdoublecount}
&\frac1{\varphi(Q)^2}
\left|\left\{
(u,v):1\le u,v\le Q,\ (uv,Q)=1,\ pq\mid u^2+v^2
\right\}\right|\notag\\
=&
\frac{
(p-1)(1+\chi(p))(q-1)(1+\chi(q))
\prod_{\substack{\ell\in\mathcal F\\
\ell\ne p,q}}(\ell-1)^2
}{\varphi(Q)^2}\notag\\
=&
\frac{(1+\chi(p))(1+\chi(q))}{(p-1)(q-1)},
\end{align}
where we used $\varphi(Q)=\prod_{p\in\mathcal F}(p-1)$. Write
\[
m=\frac{1}{\varphi(Q)^2}
\sum_{\substack{1\le u,v\le Q\\(uv,Q)=1}}
T(u^2+v^2).
\]
Then
\begin{align}
m
&=\frac1{\varphi(Q)^2}
\sum_{\substack{1\le u,v\le Q\\(uv,Q)=1}}
\left(
\sum_{\substack{q\in\mathcal F\\q\mid u^2+v^2}}\chi(q)
-\sum_{q\in\mathcal F}\frac{\chi(q)}q
\right)\notag\\
&=\sum_{q\in\mathcal F}
\left(\frac{1+\chi(q)}{q-1}-\frac{\chi(q)}{q}\right)\label{eq:m}\\
&=\sum_{q\in\mathcal F}
\left(\frac1q+\frac{1+\chi(q)}{q(q-1)}\right)\notag\\
&=H+\sum_{q\in\mathcal F}
\frac{1+\chi(q)}{q(q-1)}
\ge H\label{eq:mH},
\end{align}
where we used \eqref{eq:uvcount} and $\chi(q)^2=1$ in the second equality and $1+\chi(q)\ge0$
for every $q\in\mathcal F$ in the last inequality. Thus, by \eqref{eq:m},
\begin{align*}
T(u^2+v^2)-m
&=\sum_{\substack{q\in\mathcal F\\q\mid u^2+v^2}}\chi(q)
-\sum_{q\in\mathcal F}\frac{\chi(q)}q
-\sum_{q\in\mathcal F}
\left(\frac{1+\chi(q)}{q-1}-\frac{\chi(q)}{q}\right)\\
&=\sum_{\substack{q\in\mathcal F\\q\mid u^2+v^2}}\chi(q)
-\sum_{q\in\mathcal F}\frac{1+\chi(q)}{q-1}.
\end{align*}
Hence,
\begin{align}
&\frac1{\varphi(Q)^2}
\sum_{\substack{1\le u,v\le Q\\(uv,Q)=1}}
\bigl(T(u^2+v^2)-m\bigr)^2\notag\\
=&
\frac1{\varphi(Q)^2}
\sum_{\substack{1\le u,v\le Q\\(uv,Q)=1}}
\left(
\sum_{\substack{q\in\mathcal F\\q\mid u^2+v^2}}
\chi(q)
\right)^2
-\left(
\sum_{q\in\mathcal F}\frac{1+\chi(q)}{q-1}
\right)^2\notag\\
=&
\sum_{q\in\mathcal F}\frac{1+\chi(q)}{q-1}
+\sum_{\substack{p,q\in\mathcal F\\p\ne q}}
\frac{(1+\chi(p))(1+\chi(q))}{(p-1)(q-1)}
-\left(
\sum_{q\in\mathcal F}\frac{1+\chi(q)}{q-1}
\right)^2\notag\\
=&
\sum_{q\in\mathcal F}
\left(
\frac{1+\chi(q)}{q-1}
-\frac{(1+\chi(q))^2}{(q-1)^2}
\right)\notag\\
\le&
\sum_{q\in\mathcal F}\frac{1+\chi(q)}{q-1}
\le\frac52H\label{eq:52H},
\end{align}
where the first equality follows from \eqref{eq:uvcount}, the second equality follows from \eqref{eq:uvcount}--\eqref{eq:uvdoublecount}, and the last inequality follows from $1+\chi(3)=0$ and
\[
\frac{1+\chi(q)}{q-1}
\le\frac2{q-1}
\le\frac5{2q}
\qquad(q\ge5).
\]
Noting that $H\le m$ in \eqref{eq:mH}, if $T(u^2+v^2)<H/2$, then
\[
\bigl|T(u^2+v^2)-m\bigr|
=m-T(u^2+v^2)
>m-\frac H2
\ge\frac H2.
\]
It follows from \eqref{eq:52H} that
\begin{align*}
&\frac1{\varphi(Q)^2}
\left|\left\{
(u,v):1\le u,v\le Q,\ (uv,Q)=1,\
T(u^2+v^2)<\frac H2
\right\}\right|\\
\le&\frac1{\varphi(Q)^2}
\left|\left\{
(u,v):1\le u,v\le Q,\ (uv,Q)=1,\
\bigl|T(u^2+v^2)-m\bigr|>\frac H2
\right\}\right|\\
\le&
\frac4{H^2\varphi(Q)^2}
\sum_{\substack{1\le u,v\le Q\\(uv,Q)=1}}
\bigl(T(u^2+v^2)-m\bigr)^2\\
\le&\frac4{H^2}\cdot\frac52H
=\frac{10}{H},
\end{align*}
which proves \eqref{eq:unit-bound}. This completes the proof of Lemma~\ref{lem:local}.
\end{proof}

\begin{lemma}\label{lem:primes}
Let $\mathcal F$ be a fixed nonempty finite set of odd primes, and let $H,Q,T$ be as in Lemma~\ref{lem:local}.
As $X\to\infty$, we have
\begin{equation}\label{eq:ordinary-count}
\left|\left\{1\le n\le X:T(n)\ge\frac H2\right\}\right|
\le\frac{4X}H+O_{\mathcal F}(1)
\end{equation}
and
\begin{equation}\label{eq:prime-count}
\left|\left\{(p,q):p,q\le\sqrt X,\
T(p^2+q^2)<\frac H2\right\}\right|
\le
\left(\frac{10}H+o_{\mathcal F}(1)\right)\pi(\sqrt X)^2.
\end{equation}
\end{lemma}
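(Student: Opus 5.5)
The plan is to observe that $T(n)$ depends only on $n \bmod Q$, since the condition $q\mid n$ for $q\in\mathcal F$ is determined by the residue of $n$ modulo $Q$. Both estimates then follow by transferring the counts over a full period, already controlled in Lemma~\ref{lem:local}, to the ranges in question: plain periodicity for integers, and prime equidistribution in residue classes for primes.

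For \eqref{eq:ordinary-count}, I would split $[1,X]$ into $\lfloor X/Q\rfloor$ complete blocks of length $Q$ and one incomplete block of length less than $Q$. By periodicity and \eqref{eq:integer-bound}, each complete block contains at most $4Q/H$ integers $n$ with $T(n)\ge H/2$. The incomplete block contributes at most $Q=O_{\mathcal F}(1)$. The total is therefore at most $(X/Q)(4Q/H)+Q=4X/H+O_{\mathcal F}(1)$.

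For \eqref{eq:prime-count}, I would first discard the pairs in which $p$ or $q$ lies in $\mathcal F$. There are at most $2|\mathcal F|\pi(\sqrt X)=o(\pi(\sqrt X)^2)$ such pairs. For the remaining pairs we have $(pq,Q)=1$, and we classify them by $(p\bmod Q,\,q\bmod Q)=(u,v)$ with $(uv,Q)=1$. By the prime number theorem for arithmetic progressions, applied with the fixed modulus $Q$, the number of primes $p\le\sqrt X$ with $p\equiv u\pmod Q$ is $(1/\varphi(Q)+o_{\mathcal F}(1))\pi(\sqrt X)$. Hence the number of pairs in the class $(u,v)$ is $(\varphi(Q)^{-2}+o(1))\pi(\sqrt X)^2$. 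Since $T(p^2+q^2)=T(u^2+v^2)$, the count we want is the sum of these quantities over the bad classes $(u,v)$, namely those with $T(u^2+v^2)<H/2$. By \eqref{eq:unit-bound}, there are at most $(10/H)\varphi(Q)^2$ bad classes. Because the number of classes is bounded in terms of $\mathcal F$, the $o(1)$ errors sum to $o_{\mathcal F}(1)$. This gives the bound $(10/H+o_{\mathcal F}(1))\pi(\sqrt X)^2$.

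There is no serious obstacle. The one input beyond the earlier results is Dirichlet's theorem in its quantitative form, that is, the prime number theorem for progressions with a fixed modulus. The care needed is in checking that the uniformity is only ever required over the finitely many residue classes modulo the fixed $Q$, and that the primes dividing $Q$ contribute negligibly.
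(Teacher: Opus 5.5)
Your proposal is correct and follows essentially the same route as the paper: periodicity of $T$ modulo $Q$ together with \eqref{eq:integer-bound} gives the first bound, and the prime number theorem for progressions to the fixed modulus $Q$, summed over the residue classes controlled by \eqref{eq:unit-bound} after discarding the at most $2|\mathcal F|\pi(\sqrt X)$ pairs with $(pq,Q)>1$, gives the second. Your explicit decomposition into complete blocks of length $Q$ simply spells out the step the paper compresses into the remark that $T$ is periodic modulo $Q$.
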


\begin{proof}
Clearly, $T(n)$ is periodic modulo $Q$. It follows from \eqref{eq:integer-bound} that
\begin{align*}
\left|\left\{1\le n\le X:T(n)\ge\frac H2\right\}\right|
&\le\frac{4X}{H}+O_{\mathcal F}(1),
\end{align*}
which proves \eqref{eq:ordinary-count}.

For any integer $u$ with $(u,Q)=1$, the prime number theorem for arithmetic
progressions gives
\[
|\{p\le\sqrt X:p\equiv u\pmod Q\}|
=\frac{\pi(\sqrt X)}{\varphi(Q)}+o_{\mathcal F}(\pi(\sqrt X))
\qquad(X\to\infty);
\]
see \cite[Chapter 20]{Davenport}. Then, for any integers $u,v$ with $(uv,Q)=1$,
\begin{align}
&\left|\left\{(p,q):p,q\le\sqrt X,\
p\equiv u\pmod Q,\ q\equiv v\pmod Q\right\}\right|\notag\\
=&
\left|\{p\le\sqrt X:p\equiv u\pmod Q\}\right|
\left|\{q\le\sqrt X:q\equiv v\pmod Q\}\right|\notag\\
=&
\left(\frac1{\varphi(Q)}+o_{\mathcal F}(1)\right)
\left(\frac1{\varphi(Q)}+o_{\mathcal F}(1)\right)
\pi(\sqrt X)^2\notag\\
=&
\left(\frac1{\varphi(Q)^2}+o_{\mathcal F}(1)\right)
\pi(\sqrt X)^2\label{eq:pi2}.
\end{align}

For any pair of primes $(p,q)$, if $(pq,Q)=1$, then there exist integers $1\le u,v<Q$ with $(uv,Q)=1$ such that $p\equiv u\pmod Q$ and $q\equiv v\pmod Q$, and so $p^2+q^2\equiv u^2+v^2\pmod Q$ and $T(p^2+q^2)=T(u^2+v^2)$. Hence,
\begin{align*}
&\left|\left\{(p,q):p,q\le\sqrt X,\ (pq,Q)=1,\
T(p^2+q^2)<\frac H2\right\}\right|\\
=&\sum_{\substack{1\le u,v\le Q\\
(uv,Q)=1\\T(u^2+v^2)<H/2}}
\left|\left\{(p,q):p,q\le\sqrt X,\
p\equiv u\pmod Q,\ q\equiv v\pmod Q
\right\}\right|\\
=&
\left(
\frac1{\varphi(Q)^2}
\sum_{\substack{1\le u,v\le Q\\(uv,Q)=1\\
T(u^2+v^2)<H/2}}1
+o_{\mathcal F}(1)
\right)\pi(\sqrt X)^2\\
\le&
\left(\frac{10}{H}+o_{\mathcal F}(1)\right)\pi(\sqrt X)^2,
\end{align*}
where the last equality follows from \eqref{eq:pi2}, and the last inequality follows from \eqref{eq:unit-bound}. Since
\begin{align*}
&\left|\left\{(p,q):p,q\le\sqrt X,\ (pq,Q)>1,\
T(p^2+q^2)<\frac H2\right\}\right|\\
\le&\left|\left\{(p,q): q\le\sqrt X,\ p\mid Q\right\}\right|+\left|\left\{(p,q): p\le\sqrt X,\ q\mid Q\right\}\right|\\
\le&2|\mathcal F|\pi(\sqrt X)=o\left(\pi(\sqrt X)^2\right),
\end{align*}
it follows that
\[
\left|\left\{(p,q):p,q\le\sqrt X,\
T(p^2+q^2)<\frac H2\right\}\right|
\le
\left(\frac{10}H+o_{\mathcal F}(1)\right)\pi(\sqrt X)^2,
\]
which proves \eqref{eq:prime-count}. This completes the proof of Lemma \ref{lem:primes}.
\end{proof}

\section{Proofs of the main results}

Throughout this section, let $A=\{a_k:k\ge1\}$ and let
\[
\Sset(A)=\{p^2+q^2+a_k+a_l:
p,q\text{ are primes and }k,l\ge1\}.
\]
Since the sequence is strictly increasing, we have
$A(X)=|\{k:a_k\le X\}|$.

\begin{proof}[Proof of Theorem~\ref{thm:zero}]

Let
\[
\mathcal P=\{q:q\mid a_k\text{ for some }k\ge1\}.
\]
For every integer $k\ge1$, write
\[
a_k=\prod_{q\mid a_k}q^{\nu_q},
\qquad \nu_q\ge1.
\]
Since
\[
\sum_{j=0}^{\infty}\frac1{q^j}
=\left(1-\frac1q\right)^{-1},
\]
it follows that
\[
\begin{aligned}
\log\left(\sum_{d\mid a_k}\frac1d\right)
&=\log\left(\prod_{q\mid a_k} \left(1+\frac1q+\frac1{q^2}+\cdots+\frac1{q^{\nu_q}}\right)\right)\\
&\le\log\left(\prod_{q\mid a_k}\left(1-\frac1q\right)^{-1}\right)\\
&=\sum_{q\mid a_k}\log\left(1+\frac1{q-1}\right)\\
&\le\sum_{q\mid a_k}\frac1{q-1}
\le 2\sum_{q\mid a_k}\frac1q.
\end{aligned}
\]
Since
\[
\sup_{k\ge1}\sum_{d\mid a_k}\frac1d=\infty,
\]
it follows that
\begin{equation}\label{eq:divergence}
\sum_{q\in\mathcal P}\frac1q=\infty.
\end{equation}

Fix a nonempty finite set $\mathcal F\subseteq\mathcal P\setminus\{2\}$,
and let $H,Q,T$ be as in Lemma~\ref{lem:local}. Since $\mathcal F$ is a finite set and $a_k\mid a_{k+1}$ for every integer $k\ge1$, it follows that there exists $K\ge1$ such that $Q\mid a_K$, and so
\begin{equation}\label{eq:tail}
Q\mid a_k\quad(k\ge K),\qquad
T(p^2+q^2+a_k+a_l)=T(p^2+q^2)
\quad(k,l\ge K).
\end{equation}

Since the positive sequence $\{a_k\}_{k\ge1}$ is strictly increasing and $a_k\mid a_{k+1}$ for every integer $k\ge1$, it follows that
\[
a_k\ge2a_{k-1}\ge\cdots\ge2^{k-1}a_1\ge2^{k-1}.
\]
Therefore
\begin{equation}\label{eq:sparse}
A(X)\le1+\frac{\log X}{\log2}.
\end{equation}
If $n\le X$ and $n=p^2+q^2+a_k+a_l$, then
$p,q\le\sqrt X$ and $a_k,a_l\le X$. The number of integers of the form $n=p^2+q^2+a_k+a_l$ with $n\le X$ and either $k<K$ or
$l<K$ is at most
\begin{equation}\label{eq:prefix}
2(K-1)A(X)\pi(\sqrt X)^2
=O_{\mathcal F}\left(\frac X{\log X}\right)=o_{\mathcal F}(X).
\end{equation}

For any integer of the form $n=p^2+q^2+a_k+a_l$ with $n\le X$ and $k,l\ge K$, we consider two cases.

{\bf Case 1.~}$T(n)\ge H/2$. By \eqref{eq:ordinary-count}, the number of such integers is at most
\begin{equation}\label{eq:ge}
\left|\left\{1\le n\le X:T(n)\ge\frac H2\right\}\right|
\le\frac{4X}H+O_{\mathcal F}(1).
\end{equation}

{\bf Case 2.~}$T(n)<H/2$. By \eqref{eq:tail} and \eqref{eq:prime-count}, the number of such integers is at most
\begin{equation}\label{eq:le}
A(X)^2\left|\left\{(p,q):p,q\le\sqrt X,\
T(p^2+q^2)<\frac H2\right\}\right|
\le
A(X)^2\left(\frac{10}H+o_{\mathcal F}(1)\right)\pi\left(\sqrt X\right)^2.
\end{equation}
By \eqref{eq:sparse} and the prime number theorem,
\[
\begin{aligned}
\frac{A(X)^2\pi(\sqrt X)^2}{X}
&\le
\left(1+\frac{\log X}{\log 2}\right)^2
\frac{4+o(1)}{(\log X)^2}\\
&=(4+o(1))
\left(\frac1{\log X}+\frac1{\log 2}\right)^2\\
&=\frac4{(\log 2)^2}+o(1).
\end{aligned}
\]
It follows from \eqref{eq:prefix}, \eqref{eq:ge} and \eqref{eq:le} that
\[
\frac{\Sset(A)(X)}{X}
\le\frac1H\left(4+\frac{40}{(\log2)^2}\right)+o_{\mathcal F}(1).
\]
Then
\begin{equation}\label{eq:upper-density}
\overline d(\Sset(A))
\le\frac1H\left(4+\frac{40}{(\log2)^2}\right).
\end{equation}
By \eqref{eq:divergence}, the finite set $\mathcal F$ can be chosen such that $H$ is arbitrarily large. Letting $H\to\infty$ in
\eqref{eq:upper-density}, we have
\[
\overline d(\Sset(A))=0.
\]
This completes the proof of Theorem~\ref{thm:zero}.
\end{proof}

\begin{proof}[Proof of Corollary~\ref{cor:criterion}]
The sufficiency follows from Theorem \ref{thm:daipan}.

Suppose that $\underline d(\Sset(A))>0$.
By Theorem~\ref{thm:zero}, condition \eqref{eq:divisors} holds.
For completeness, we give the short counting argument for
\eqref{eq:growth}.

If \eqref{eq:growth} does not hold, there are indices $j\to\infty$
such that $\log a_j/j\to\infty$. Let $X_j=a_j-1$.
Since $A(X_j)=j-1$ and $\log X_j\sim\log a_j$, it follows that
\[
\frac{\Sset(A)(X_j)}{X_j}
\le\frac{(j-1)^2\pi(\sqrt{X_j})^2}{X_j}
=O\!\left(\left(\frac j{\log X_j}\right)^2\right)=o(1),
\]
a contradiction with $\underline d(\Sset(A))>0$.
Hence \eqref{eq:growth} holds. This completes the proof of Corollary~\ref{cor:criterion}.
\end{proof}

\noindent\textbf{Acknowledgments.} This work was supported by the National Natural Science Foundation of China (Grant No. 12371005).

\end{document}